\documentclass[reqno]{amsart}
\usepackage{amssymb,amsmath,amsthm,latexsym,booktabs,todonotes, color, comment, lineno}

\usepackage[T1]{fontenc}
\usepackage{lmodern}
\usepackage{amsmath,amssymb,amsthm,mathtools}
\usepackage{microtype}
\usepackage{geometry}
\usepackage{enumitem}
\usepackage{pdfpages}
\usepackage[hidelinks]{hyperref}

\newcommand{\Z}{\mathbb Z}
\newcommand{\F}{\mathbb F}

\newcommand{\Ker}{\operatorname{Ker}}

\newcommand{\rank}{\operatorname{rank}}

\theoremstyle{definition}
\newtheorem{definition}{Definition}

\theoremstyle{plain}
\newtheorem{lemma}[definition]{Lemma}

\newtheorem{theorem}[definition]{Theorem}

\usepackage{todonotes}

\begin{document}


\title{Corrigendum to "A CHARACTERIZATION\\ OF THE UNIT GROUP IN $\mathbb{Z}[T \times C_2]$"}
\author{Richard M. Low}
\address{Department of Mathematics and Statistics\\
         San Jose State University\\
         San Jose, CA 95192\\
         USA}
\email{richard.low@sjsu.edu}

\keywords{integral group ring, unit problem}

\date{August 25, 2026. Version 0.4\\
\indent
\textit{2020 Mathematics Subject Classification. 16S34}}

\begin{abstract}
We correct Lemma 3.6 and Theorem 3.7 of T.~Bilgin,
O.~Kusmus, and R.~M.~Low,
\emph{A Characterization of the Unit Group in
$\mathbb Z[T\times C_2]$},
Bull.\ Korean Math.\ Soc.\ \textbf{53} (2016), no. 4, 1105-1112. In particular, the corrected statement of Theorem 3.7 is that $U_1\bigl(\Z[T\times C_2]\bigr)
  \cong
  [F_{33}\rtimes F_5]\rtimes[T\times C_2].
$
\end{abstract}
\maketitle



\section{Preliminaries} \label{Prelim}

This corrigendum makes a few corrections for the paper ``A Characterization of the Unit Group in $\mathbb{Z}[T \times C_2]$.'' For the sake of completeness, we have included the original paper \cite{BKL} in the Appendix. The following framework and Lemmas 2.1--2.2, 3.1--3.5, and Theorem 2.3 found in \cite{BKL} are correct.

\vspace{10pt}

Let
\[
  T=
  \langle a,b\mid
    a^6=1,\ a^3=b^2,\ ba=a^5b
  \rangle,
\]
the nonabelian group of order $12$, and let
\[
  \rho:\Z T\longrightarrow \F_2T
\]
be the surjective ring homomorphism, where $\rho$ reduces the coefficients modulo $2$.

\vspace{10pt}

Parmenter's decomposition \cite{Parmenter} gives 
\[
  U_1(\Z T)=V\rtimes T,
\]
where
\[
  V=\langle v_1,v_2,v_3,v_4,v_5\rangle
\]
is a free group of rank $5$.  Thus
\[
  V\cong F_5.
\]

Let
\[
  M=
  \Ker\!\left(
     \rho:U(\Z T)\longrightarrow U(\F_2T)
  \right)
\]
and
\[
  M^+
  =
  M\cap U_1(\Z T)
  =
  \Ker\!\left(
     \rho\big|_{U_1(\Z T)}
  \right).
\]

Finally, set
\[
  E=
  \langle
     \rho(v_1),\rho(v_2),\rho(v_3)
  \rangle.
\]

\underline{Facts}.

\vspace{10pt}

\begin{enumerate}[label=\textnormal{(\roman*)},leftmargin=2.5em]

\item 
Lemma 3.1 gives
\begin{equation}
\label{eq:rhoV}
  \rho(V)
  =
  E\rtimes\langle a^2\rangle,
  \qquad
  E\cong C_2 \times C_2 \times C_2,
  \qquad
  |\rho(V)|=24.
\end{equation}

\item
Lemma 3.2 gives
\begin{equation}
\label{eq:rhoU1}
  \rho[U_1(\Z T)]
  =
  E\rtimes T.
\end{equation}
In particular,
\[
  E\cap T=\{1\}
\]
and
\[
  |\rho[U_1(\Z T)]|
  =
  |E|\,|T|
  =
  8\cdot12
  =
  96.
\]

\item
\[
  |\rho(V)\cap T|
  =
  \frac{|\rho(V)|\,|T|}
       {|\rho(V)T|}
  =
  \frac{24\cdot12}{96}
  =
  3.
\]
Since
\[
  \langle a^2\rangle
  \leq
  \rho(V)\cap T
\]
and $a^2$ has order $3$, it follows that
\begin{equation}
\label{eq:rhoVcapT}
\rho(V)\cap T=\langle a^2\rangle.
  \end{equation}

\item
Lemma 3.3 gives a unique canonical form for the elements of
$\rho(V)$, and Lemma 3.4 uses $E\cap T=1$ to show that
\[
  et=1,\qquad e\in E,\quad t\in T,
\]
forces $e=t=1$.\\

\item
Lemma 3.5 establishes
\begin{equation}
\label{eq:lemma35}
  M^+\leq V\rtimes\langle a^2\rangle.
\end{equation}

\end{enumerate}

\section{Corrections} \label{Correct}

We now use the facts described above to make corrections to Lemma 3.6 and Theorem 3.7 in \cite{BKL}.\\

\indent
In the published proof of Lemma 3.6 in \cite{BKL}, it is asserted that $M^+ \leq V$ and hence $M^+ = M^+ \cap V$. This is not true. Consider the following counterexample. Since $a^4\in\langle a^2\rangle\leq\rho(V)$, there exists $v\in V$ such that
\[
  \rho(v)=a^4.
\]
Then
\[
  \rho(va^2)
  =
  a^4a^2
  =
  a^6
  =
  1,
\]
so
\[
  va^2\in M^+.
\]
Its $T$-component is $a^2\neq1$, and hence, by the uniqueness of the
decomposition $V\rtimes T$,
\[
  va^2\notin V.
\]
Consequently
\[
  M^+\not\leq V.
\]

\begin{lemma}[Corrected Lemma 3.6]\label{Correct_Lem3.6}
Let
\[
  L:=M^+\cap V.
\]
Then
\[
  L\cong F_{97},
  \qquad
  [M^+:L]=3,
\]
and $M^+$ is a free group of rank $33$.  In particular,
\[
  M^+\cong F_{33}.
\]
More precisely, there is a short exact sequence
\begin{equation}
\label{eq:mainSES}
  1\longrightarrow F_{97}
  \longrightarrow M^+
  \longrightarrow C_3
  \longrightarrow1,
\end{equation}
and this extension is nonsplit.
\end{lemma}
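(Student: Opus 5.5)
The plan is to get everything from the restriction of $\rho$ to $V\rtimes\langle a^2\rangle$, and then use torsion-freeness plus an Euler characteristic count.

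\emph{Step 1: the group $L$.} By definition $L=M^+\cap V=\Ker(\rho|_V)$, so $[V:L]=|\rho(V)|=24$ by \eqref{eq:rhoV}. Since $V\cong F_5$, the Nielsen--Schreier index formula gives that $L$ is free of rank $24(5-1)+1=97$.

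\emph{Step 2: the index $[M^+:L]$.} By \eqref{eq:lemma35}, $M^+\le V\rtimes\langle a^2\rangle$. The projection $V\rtimes\langle a^2\rangle\to\langle a^2\rangle\cong C_3$ has kernel $V$. Its restriction to $M^+$ therefore has kernel $M^+\cap V=L$, and it induces an injection $M^+/L\hookrightarrow C_3$. The counterexample just given produces $va^2\in M^+$ with $T$-component $a^2$, so this map is onto. This yields the exact sequence \eqref{eq:mainSES} and $[M^+:L]=3$.

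\emph{Step 3: $M^+$ is free of rank $33$.} This is the main step. I first show that $M^+$ is torsion-free. Let $u\in M^+$ have finite order with $u\ne1$. Since $u\in U_1(\Z T)$ is a nontrivial torsion unit, the Berman--Higman theorem says its coefficient at the identity is $0$. On the other hand, $\rho(u)=1$ means $u=1+2x$ for some $x\in\Z T$, so the identity coefficient of $u$ is odd. This is a contradiction, so $M^+$ is torsion-free.

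Next, $M^+$ contains the free group $L$ with finite index. A torsion-free group with a free subgroup of finite index is itself free: this is Stallings' theorem, together with the fact that finitely generated torsion-free virtually free groups are free. Hence $M^+$ is free, and it is finitely generated, being a finite extension of $F_{97}$. Its rank follows from multiplicativity of the Euler characteristic: $1-97=\chi(L)=3\,\chi(M^+)=3(1-r)$. So $r-1=32$, giving $r=33$ and $M^+\cong F_{33}$.

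If one wishes to avoid Stallings, an alternative is to realize $V\rtimes\langle a^2\rangle$ as the fundamental group of a finite graph of groups with finite vertex groups (Karrass--Pietrowski--Solitar). Its torsion-free subgroup $M^+$ then acts freely on the Bass--Serre tree and is therefore free. Either route rests on the torsion-freeness obtained above from Berman--Higman, and that is where the real content lies.

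\emph{Step 4: nonsplitting.} If \eqref{eq:mainSES} split, $M^+$ would contain a subgroup isomorphic to $C_3$, hence a nontrivial element of order $3$. This contradicts the torsion-freeness from Step 3, so the extension is nonsplit.
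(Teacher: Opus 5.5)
Your proof is correct. The computation of $L$, the index $[M^+:L]=3$, the rank count and the nonsplitting argument all agree with the paper in substance, with two small variations:
\begin{itemize}
\item In Step 2 you bound the image of $M^+$ using Lemma 3.5 and get surjectivity from the explicit element $va^2$. The paper instead bounds the image directly from $\rho(V)\cap T=\langle a^2\rangle$.
\item Your Euler characteristic count $1-97=3(1-r)$ is just the Schreier index formula $97=1+3(r-1)$ that the paper uses.
\end{itemize}

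The genuine difference is how you prove that $M^+$ is torsion-free.
\begin{itemize}
\item The paper's route is elementary. It first uses $M^+/L\cong C_3$ and the freeness of $L$ to reduce to an element $y$ of order exactly $3$. It then shows by induction on $(1+2^kP_k)^3=1$ that $y-1\in 2^k\Z T$ for every $k$. Since $(\Z T,+)\cong\Z^{12}$, this forces $y=1$.
\item Your route invokes the Berman--Higman theorem. A torsion unit other than $\pm1$ has identity coefficient $0$, but every element of $1+2\Z T$ has odd identity coefficient. The case $u=-1$ is excluded because $u$ has augmentation $1$.
\end{itemize}

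Your argument is shorter and handles torsion of every order at once, without needing the exact sequence from Step 2 first. The same reasoning shows that $U_1(\Z G)\cap(1+p\Z G)$ is torsion-free for any finite group $G$ and any prime $p$. The cost is importing a classical but character-theoretic theorem.

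The paper's argument uses nothing beyond the additive structure of $\Z T$. However, it only works for elements of odd order: the induction step needs the exponent to be odd, and it fails for $-1=1+2\cdot(-1)$. That is why the paper must first reduce to elements of order $3$.
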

\begin{proof}

We divide the proof into four parts.\\

\medskip
\noindent
\textbf{\underline{Step 1}.  The subgroup $L=M^+\cap V$ is free of rank $97$.}

Restrict $\rho$ to $V$:
\[
  \rho|_V:V\longrightarrow\rho(V).
\]
Its kernel is
\[
  \Ker(\rho|_V)
  =
  V\cap\Ker\!\left(
     \rho|_{U_1(\Z T)}
  \right)
  =
  V\cap M^+
  =
  L.
\]
The map is surjective onto $\rho(V)$ by definition.  Hence 
\[
  V/L\cong\rho(V).
\]
By \eqref{eq:rhoV},
\[
  |\rho(V)|=24.
\]
Therefore
\begin{equation}
\label{eq:indexVL}
  [V:L]=24.
\end{equation}

Since $V\cong F_5$, the Nielsen--Schreier Theorem yields
\[
  \rank(L)
  =
  1+[V:L](\rank(V)-1)
  =
  1+24(5-1)
  =
  97.
\]
Thus
\begin{equation}
\label{eq:Lrank97}
  L=M^+\cap V\cong F_{97}.
\end{equation}

\medskip
\noindent
\textbf{\underline{Step 2}. The subgroup $L$ has index $3$ in $M^+$.}

Let
\[
  p:V\rtimes T\longrightarrow T,
  \qquad
  p(vt)=t,
\]
be the canonical group homomorphism. Here we identify $T$ with its canonical image in $U(\mathbb{F}_2 T)$; equivalently, $\rho (t) = t$ for every $t \in T$.

We first determine $p(M^+)$.  Let $m\in M^+$ and write it uniquely as
\[
  m=vt,
  \qquad
  v\in V,\quad t\in T.
\]
Since $\rho(m)=1$,
\[
  \rho(v)t=1,
\]
so
\[
  t^{-1}=\rho(v)\in\rho(V)\cap T.
\]
Using \eqref{eq:rhoVcapT},
\[
  t\in\langle a^2\rangle.
\]
Hence
\[
  p(M^+)\leq\langle a^2\rangle.
\]

Conversely, take any
\[
  t\in\langle a^2\rangle.
\]
By \eqref{eq:rhoV},
\[
  t^{-1}\in\langle a^2\rangle\leq\rho(V).
\]
Therefore there exists $v\in V$ such that
\[
  \rho(v)=t^{-1}.
\]
It follows that
\[
  \rho(vt)
  =
  \rho(v)t
  =
  t^{-1}t
  =
  1.
\]
Thus
\[
  vt\in M^+
\]
and
\[
  p(vt)=t.
\]
Consequently
\begin{equation}
\label{eq:imageProjection}
  p(M^+)=\langle a^2\rangle\cong C_3.
\end{equation}

The kernel of the restriction
\[
  p|_{M^+}:M^+\longrightarrow\langle a^2\rangle
\]
is
\[
  \Ker(p|_{M^+})
  =
  M^+\cap V
  =
  L.
\]
Hence
\begin{equation}
\label{eq:SESML}
  1\longrightarrow L
  \longrightarrow M^+
  \overset{p|_{M^+}}{\longrightarrow}
  \langle a^2\rangle
  \longrightarrow1
\end{equation}
is exact.  Therefore
\begin{equation}
\label{eq:indexML}
  [M^+:L]=3.
\end{equation}

Combining \eqref{eq:Lrank97} and \eqref{eq:SESML} gives
\[
  1\longrightarrow F_{97}
  \longrightarrow M^+
  \longrightarrow C_3
  \longrightarrow1.
\]

\medskip
\noindent
\textbf{\underline{Step 3}.  The group $M^+$ is torsion-free.}

Since $L\cong F_{97}$, the subgroup $L$ is torsion-free. Suppose that $x\in M^+$ has finite order $n$.  If the image of $x$
under the quotient map
\[
  M^+\longrightarrow M^+/L\cong C_3
\]
is trivial, then $x\in L$, and therefore $x=1$.

Assume instead that the image of $x$ is nontrivial.  Its order in
$C_3$ is then $3$.  The order of the image divides $n$, so $3\mid n$.
Set
\[
  y=x^{n/3}.
\]
The order of $y$ is exactly $3$, and
\[
  y\in M^+.
\]

We now prove that no nonidentity element of order $3$ can belong to
$M^+$. Since $y\in M^+$,
\[
  \rho(y)=1.
\]
Equivalently,
\[
  y-1\in2\Z T.
\]
Thus there exists $P_1\in\Z T$ such that
\[
  y=1+2P_1.
\]

We claim inductively that for every integer $k\geq1$ there exists
$P_k\in\Z T$ satisfying
\begin{equation}
\label{eq:2adicInduction}
  y=1+2^kP_k.
\end{equation}

The case $k=1$ has already been established.  Suppose
\eqref{eq:2adicInduction} holds for some $k\geq1$.  Since $y^3=1$,
\[
  (1+2^kP_k)^3=1.
\]
Although $\Z T$ need not be commutative, the usual binomial expansion
is valid here because the identity element commutes with $P_k$.
Therefore
\[
  1
  +3\cdot2^kP_k
  +3\cdot2^{2k}P_k^2
  +2^{3k}P_k^3
  =
  1.
\]
So in the torsion-free
additive group of $\Z T$, we obtain
\[
  3P_k
  +3\cdot2^kP_k^2
  +2^{2k}P_k^3
  =
  0.
\]
Reduce the coefficients modulo $2$.  Since $k\geq1$, the last two
terms vanish modulo $2$, while $3\equiv1\pmod2$.  Hence
\[
  \rho(P_k)=0.
\]
Thus
\[
  P_k\in2\Z T.
\]
Write
\[
  P_k=2P_{k+1}.
\]
Then
\[
  y
  =
  1+2^kP_k
  =
  1+2^{k+1}P_{k+1},
\]
which proves the induction.

Consequently
\[
  y-1\in2^k\Z T
  \qquad
  \text{for every }k\geq1.
\]
Since $|T|=12$, the additive group of $\Z T$ is free abelian of
rank $12$:
\[
  (\Z T,+)\cong\Z^{12}.
\]
Therefore
\[
  \bigcap_{k\geq1}2^k\Z T=\{0\}.
\]
It follows that
\[
  y-1=0,
\]
so $y=1$, contradicting the fact that $y$ has order $3$.

Thus no nontrivial finite-order element can occur in $M^+$:
\begin{equation}
\label{eq:MplusTorsionFree}
  M^+\text{ is torsion-free}.
\end{equation}

\medskip
\noindent
\textbf{\underline{Step 4}.  The group $M^+$ is free of rank $33$.}

By \eqref{eq:Lrank97} and \eqref{eq:indexML}, $M^+$ contains the free
group
\[
  L\cong F_{97}
\]
as a subgroup of finite index $3$.  Thus $M^+$ is virtually free.

A standard consequence of Bass--Serre theory \cite{Serre} is that every torsion-free virtually free group is free. Hence, by \eqref{eq:MplusTorsionFree},
\[
  M^+\cong F_r
\]
for some $r$.

Apply the Nielsen--Schreier Theorem to the index-$3$ subgroup
\[
  L\leq M^+.
\]
Using
\[
  \rank(L)=97
  \qquad\text{and}\qquad
  [M^+:L]=3,
\]
we obtain
\[
  97
  =
  1+3(r-1).
\]
Thus, 
\[
  r=33
\]
and 
\[
  M^+\cong F_{33}.
\]

Finally, the extension
\[
  1\longrightarrow F_{97}
  \longrightarrow M^+
  \longrightarrow C_3
  \longrightarrow1
\]
cannot split.  If it split, $M^+$ would contain a subgroup isomorphic
to $C_3$, contradicting the torsion-freeness of $M^+$.
\end{proof}

\begin{theorem}[Corrected Theorem 3.7]\label{Correct_Theorem3.7}

Let $T^* = T \times C_2$, where $T = \langle a, b: a^6 = 1, a^3 = b^2, 
ba = a^5b \rangle$. Then, $U_1(\mathbb{Z}T^*) \cong [F_{33} \rtimes 
F_5] \rtimes T^*$, where $F_i$ is a free group of rank $i$.
\end{theorem}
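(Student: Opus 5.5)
We follow the strategy of the original Theorem 3.7 in \cite{BKL} and change only the input from Lemma 3.6. Write $C_2=\langle c\rangle$. Then $\Z T^*=\Z[T\times C_2]\cong(\Z T)[C_2]$, so every element of $\Z T^*$ can be written uniquely as $\alpha+\beta c$ with $\alpha,\beta\in\Z T$.

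First, we identify the mod-2 pullback. The standard isomorphism
\[
  \Z T^*\cong\{(x,y)\in\Z T\times\Z T : \rho(x)=\rho(y)\},\qquad \alpha+\beta c\mapsto(\alpha+\beta,\ \alpha-\beta),
\]
is given by $c\mapsto 1$ and $c\mapsto -1$ in the two coordinates. It identifies $U(\Z T^*)$ with the group of pairs of units $(x,y)$ satisfying $\rho(x)=\rho(y)$. This identification respects augmentation up to the sign in the second coordinate.

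We restrict to normalized units. The projection onto the first coordinate,
\[
  \pi:U_1(\Z T^*)\to U_1(\Z T),\qquad u\mapsto x=\epsilon_{c\mapsto 1}(u),
\]
is surjective, since $\Z T\subseteq\Z T^*$ is fixed. We therefore get a split exact sequence
\[
  1\to K\to U_1(\Z T^*)\to U_1(\Z T)\to 1.
\]
The kernel $K$ consists of the units with $x=1$, that is, the pairs $(1,y)$ with $\rho(y)=1$. Keeping track of the augmentation sign, $K$ should be identified with $M^+$ (or with $\pm M^+$ intersected appropriately; we will check that the normalization forces exactly $M^+$, as in the original paper).

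This gives
\[
  U_1(\Z T^*)\cong M^+\rtimes U_1(\Z T).
\]
Corrected Lemma 3.6 shows $M^+\cong F_{33}$, and Parmenter's decomposition gives $U_1(\Z T)=V\rtimes T$ with $V\cong F_5$. Hence
\[
  U_1(\Z T^*)\cong F_{33}\rtimes(F_5\rtimes T).
\]

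Next we reassociate to the form claimed in the statement. Since $M^+$ is normal in the whole group, $M^+V$ is a subgroup. Because $M^+\cap V=1$ inside the pullback (the coordinates differ), we have $M^+V\cong M^+\rtimes V$. We then check that $M^+V$ is normal in $M^+\rtimes(V\rtimes T)$, with complement $T$. Finally, $c$ itself is a trivial unit which is central, and it gives the extra $C_2$ factor. Pairs of the form $(t,\pm t)$ together account for $T^*$, and the kernel-plus-$V$ part is unchanged. This yields
\[
  [F_{33}\rtimes F_5]\rtimes T^*.
\]

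The main obstacle is the bookkeeping in the pullback. The delicate step is correctly identifying the kernel $K$ with $M^+$ rather than with $M\cap U_1$ twisted by the sign $-1$. It is also delicate to show that $T^*$, rather than just $T$, appears as the final complement with trivial intersection. We would handle this by explicitly computing the images of $t$ and $tc$ in the pullback and using $\rho(-1)=1$.
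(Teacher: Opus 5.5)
Your overall route is the paper's: the pullback $\Z T^*\cong\{(x,y):\rho(x)=\rho(y)\}$ underlies Theorem 2.3 of the original paper, which the corrigendum invokes to get $U_1(\Z T^*)=K\rtimes(V\rtimes T)$. However, the step you flag as delicate fails. The kernel of $\pi$ is not $M^+$, and normalization cannot force it to be. Since $\epsilon(\alpha+\beta c)=\epsilon(\alpha)+\epsilon(\beta)=\epsilon(x)$, being normalized only constrains the first coordinate. The second coordinate $y=\alpha-\beta$ can have augmentation $-1$. Concretely, $c\mapsto(1,-1)$, so $c\in K$. Thus $K=\{(1,y):y\in M\}\cong M=M^+\times\langle -1\rangle\cong M^+\times C_2$, which is exactly the identification $K\cong M$ used in the paper.

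Consequently your intermediate isomorphism $U_1(\Z T^*)\cong F_{33}\rtimes(F_5\rtimes T)$ is false. The group $F_{33}\rtimes F_5$ is torsion-free, so every finite subgroup of the right-hand side embeds in $T$. But $U_1(\Z T^*)$ contains $T^*$, which has order $24$. Adding $c$ afterwards as an extra central factor is then inconsistent, because $c$ already lies in the subgroup you identified with the torsion-free group $M^+$.

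The repair is to split off the sign inside the kernel rather than add it from outside. Put $K^+=\{(1,y):y\in M^+\}$. Conjugation by any $(x,z)$ in the pullback sends $(1,y)$ to $(1,zyz^{-1})$, which preserves both $\rho(y)=1$ and $\epsilon(y)=1$, so $K^+$ is normal. Moreover $K=K^+\times\langle c\rangle$ with $c$ central.

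For the diagonal copy $\{(v,v)\}$ of $V$ one has $K^+\cap V=1$, so $K^+V=K^+\rtimes V$. This subgroup is normal, since it is normalized by $K^+$, $c$, $V$ and $T$. Its complement is $T\times\langle c\rangle=\{(t,\pm t):t\in T\}=T^*$, and the intersection is trivial. Indeed, if $(v,yv)=(t,\pm t)$ with $y\in M^+$, then $v=t\in V\cap T=1$, and $y=\pm1\in M^+$ forces $y=1$.

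Since $K^+V\cdot T^*=KVT$ is the whole group, $U_1(\Z T^*)=(K^+\rtimes V)\rtimes T^*\cong[F_{33}\rtimes F_5]\rtimes T^*$. This is the paper's chain $M\rtimes(V\rtimes T)=[M^+\times C_2]\rtimes(V\rtimes T)=[M^+\rtimes V]\rtimes(T\times C_2)$ made explicit.
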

\begin{proof}
Invoking Theorem 2.3 in \cite{BKL}, we obtain $U_1(\mathbb{Z}[T \times C_2]) = K \rtimes (V \rtimes T) \cong M \rtimes (V \rtimes T) = [M^+ \times C_2] \rtimes (V \rtimes T) = [M^+ \rtimes V] \rtimes (T \times C_2) = [F_{33} \rtimes F_5] \rtimes (T \times C_2)$, where $F_i$ is a free group of rank $i$.
\end{proof}

\section{Tool and computational resource disclosure}\label{Disclosure}
The author supports the Leiden Declaration on Artificial Intelligence and Mathematics. ChatGPT was used to proofread the manuscript for grammar, spelling, and punctuation.

\section{Appendix}\label{Appendix}

For the convenience of the reader, we have included the entire original paper \cite{BKL} in this appendix. 

\vspace{10pt}

\includepdf[pages=-,pagecommand={}]{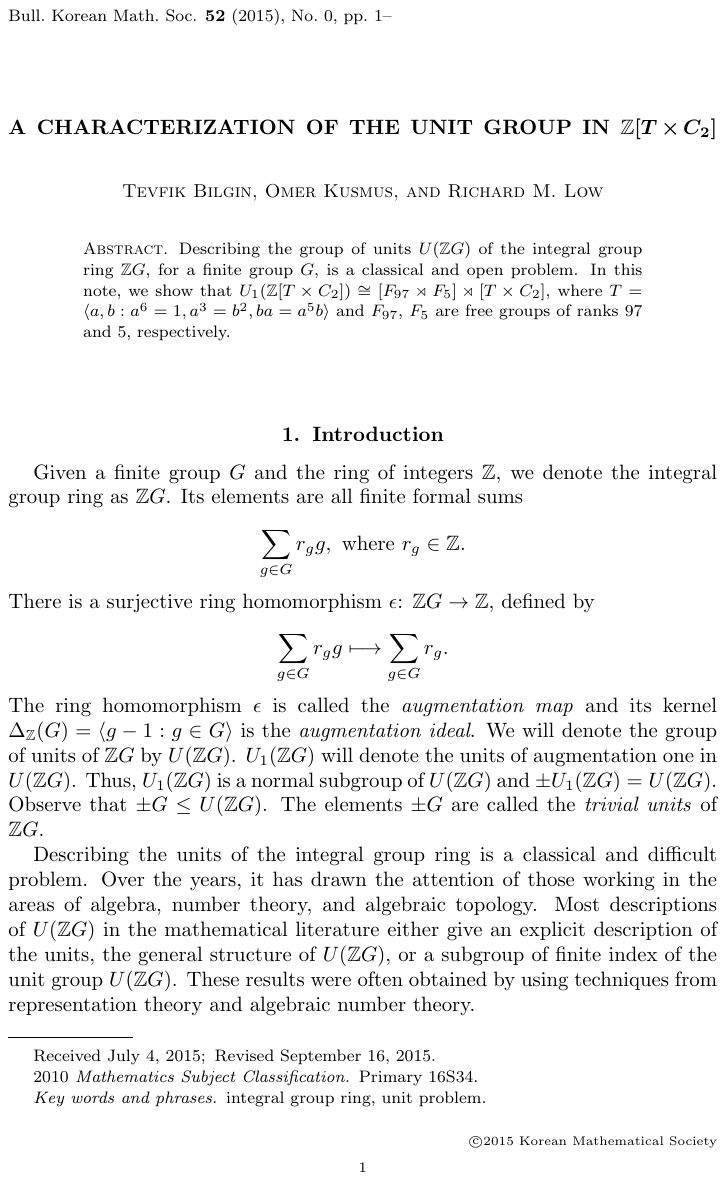}

\end{document}